\begin{document}

\newtheorem{cor}{Corollary}[section]
\newtheorem{thm}[cor]{Theorem}
\newtheorem{prop}[cor]{Proposition}
\newtheorem{lemma}[cor]{Lemma}
\theoremstyle{definition}
\newtheorem{defi}[cor]{Definition}
\theoremstyle{remark}
\newtheorem{remark}[cor]{Remark}
\newtheorem{example}[cor]{Example}
\newtheorem{question}[cor]{Question}

\newcommand{\cD}{{\mathcal D}}
\newcommand{\cM}{{\mathcal M}}
\newcommand{\cT}{{\mathcal T}}
\newcommand{\cML}{{\mathcal M\mathcal L}}
\newcommand{\cGH}{{\mathcal G\mathcal H}}
\newcommand{\C}{{\mathbb C}}
\newcommand{\N}{{\mathbb N}}
\newcommand{\R}{{\mathbb R}}
\newcommand{\Z}{{\mathbb Z}}
\newcommand{\Kt}{\tilde{K}}
\newcommand{\Mt}{\tilde{M}}
\newcommand{\St}{\tilde{S}}
\newcommand{\dr}{{\partial}}
\newcommand{\kappab}{\overline{\kappa}}
\newcommand{\pib}{\overline{\pi}}
\newcommand{\Sigmab}{\overline{\Sigma}}
\newcommand{\gd}{\dot{g}}
\newcommand{\diff}{\mbox{Diff}}
\newcommand{\dev}{\mbox{dev}}
\newcommand{\devb}{\overline{\mbox{dev}}}
\newcommand{\devt}{\tilde{\mbox{dev}}}
\newcommand{\vol}{\mbox{Vol}}
\newcommand{\hess}{\mbox{Hess}}
\newcommand{\db}{\overline{\partial}}
\newcommand{\Sigmat}{\tilde{\Sigma}}
\newcommand{\lambdat}{\tilde{\lambda}}

\newcommand{\cunc}{{\mathcal C}^\infty_c}
\newcommand{\cun}{{\mathcal C}^\infty}
\newcommand{\dd}{d_D}
\newcommand{\dmin}{d_{\mathrm{min}}}
\newcommand{\dmax}{d_{\mathrm{max}}}
\newcommand{\Dom}{\mathrm{Dom}}
\newcommand{\dn}{d_\nabla}
\newcommand{\ded}{\delta_D}
\newcommand{\delmin}{\delta_{\mathrm{min}}}
\newcommand{\delmax}{\delta_{\mathrm{max}}}
\newcommand{\hmin}{H_{\mathrm{min}}}
\newcommand{\maxi}{\mathrm{max}}
\newcommand{\oL}{\overline{L}}
\newcommand{\oP}{{\overline{P}}}
\newcommand{\Ran}{\mathrm{Ran}}
\newcommand{\tgamma}{\tilde{\gamma}}
\newcommand{\cotan}{\mbox{cotan}}

\newcommand{\II}{I\hspace{-0.1cm}I}
\newcommand{\III}{I\hspace{-0.1cm}I\hspace{-0.1cm}I}

\title{On weakly convex star-shaped polyhedra}
\author{Jean-Marc Schlenker}
\address{Institut de Mathématiques de Toulouse, UMR CNRS 5219,
Universit\'e Toulouse III,
31062 Toulouse cedex 9, France }
\email{schlenker@math.ups-tlse.fr}
\date{April 2007 (v1)}

\begin{abstract}
Weakly convex polyhedra which are star-shaped with respect to 
one of their vertices are infinitesimally rigid. This is partial answer
to the question whether every decomposable weakly convex polyhedron is
infinitesimally rigid. The proof uses
a recent result of Izmestiev on the geometry of convex caps.
\end{abstract}

\maketitle

%\tableofcontents

\section{Introduction}

\subsection{The rigidity of convex polyhedra.}

The rigidity of Euclidean polyhedra has been of interest to
geometers since 
Legendre \cite{legendre} and Cauchy \cite{cauchy} proved that
convex polyhedra are globally rigid. This result was an important
source of inspiration in subsequent geometry, for instance 
for the theory of convex
surfaces, and was a key tool in Alexandrov's theory of isometric
embeddings of polyhedra \cite{alex,Po}. 

The notion of global rigidity leads directly to the related notion
of infinitesimal rigidity; a polyhedron is {\it infinitesimally 
rigid} if any non-trivial first-order deformation induces a non-zero
variation of the metric on one of its faces. Infinitesimal rigidity
is important in applications since a structure which is rigid but
not infinitesimally rigid is likely to be physically unreliable.
Although Cauchy's 
argument can be used to prove that convex polyhedra are infinitesimally
rigid, this result was proved much later by M. Dehn \cite{dehn-konvexer},
by completely different methods.

\subsection{Non-convex polyhedra.}

Cauchy's theorem left open the question of rigidity of non-convex
polyhedra, until examples of flexible polyhedra were constructed
by Connelly \cite{connelly}. It would however be interesting to
know a class of rigid polyhedra wider than the convex ones.
We say that a polyhedron is {\it weakly convex} if its vertices
are the vertices of a convex polyhedron, and that it is 
{\it decomposable} if it can be cut into convex polyhedra without
adding any vertex.

\begin{question}\label{q:main}
Let $P$ be a weakly convex, decomposable polyhedron. Is $P$ 
infinitesimally rigid ? 
\end{question}

This question came up naturally in \cite{rcnp}, where it was proved,
using hyperbolic geometry tools, that the result is positive if 
the vertices of $P$ are on an ellipsoid, or more generally if 
there exists an ellipsoid which contains no vertex of $P$ but
intersects all its edges. It was proved in \cite{vienna} that
the answer is also positive for two other classes of 
polyhedra: suspensions -- which can be cut into simplices
with only one interior edge -- and polyhedra which have at
most one non-convex edge, or two non-convex edges sharing a 
vertex.

\subsection{Main result.}

Here we extend the result of \cite{vienna} to a wider class
of weakly convex decomposable polyhedra. From here on all the
polyhedra we consider are triangulated; it is always
possible to reduce to that situation by adding ``flat'' 
edges in non-triangular faces. 
For a polyhedron $P$ which is star-shaped with respect
to a vertex $v_0$, we do this subdivision by decomposing 
all non-triangular faces adjacent $v_0$ by adding only
diagonals containing $v_0$, so that this refinement of the
triangulation of the boundary of $P$ is compatible with
a triangulation of the interior of $P$ for which all simplices
contain $v_0$.

\begin{thm}\label{tm:main}
Let $P$ be a weakly convex polyhedron, which is star-shaped
with respect to one of its vertices. Then $P$ is infinitesimally
rigid.
\end{thm}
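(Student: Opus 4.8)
\emph{Setup.} Normalise so that $v_0$ sits at the origin, and fix the triangulation of $P$ described above, in which every tetrahedron is a cone $v_0\ast\tau$ over a triangle $\tau$ of $\partial P$ not containing $v_0$; star-shapedness with respect to $v_0$ is exactly what makes such a compatible triangulation exist. The edges of this triangulation split into three types: the edges of $\partial P$ not meeting $v_0$, the edges $v_0v_i$ with $v_i$ adjacent to $v_0$ in $\partial P$ (these lie on $\partial P$ as well, since $\mathrm{star}(v_0)$ is triangulated using only diagonals through $v_0$), and the \emph{interior} edges $v_0v_j$, where $v_j$ runs over the vertices of $P$ \emph{not} adjacent to $v_0$. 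Only the last family has lengths that are not constrained by the induced metric on the faces of $P$; write $\ell_{0j}$ for the length of such an edge, and $\kappa_j$ for its cone angle defect, i.e.\ $2\pi$ minus the sum of the dihedral angles of the tetrahedra of the triangulation along $v_0v_j$. Gluing flat Euclidean simplices with prescribed edge lengths defines a smooth map $\Phi:(\ell_{0j})_j\mapsto(\kappa_j)_j$ on a neighbourhood of the lengths coming from $P$; at $P$ itself $\Phi=0$, and by the Schläfli identity its differential $J=\big(\partial\kappa_j/\partial\ell_{0i}\big)_{i,j}$ is a symmetric square matrix, indexed by the vertices of $P$ not adjacent to $v_0$.

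\emph{Reduction to non-degeneracy of $J$.} Let $(\dot q_i)$ be a first-order isometric deformation of $P$, so $\langle q_i-q_j,\dot q_i-\dot q_j\rangle=0$ for every edge $ij$ of the triangulation of $\partial P$; we must show it is the restriction of a Killing field of $\R^3$. Because $(\dot q_i)$ also moves $v_0$, it induces first-order variations $u_j:=\langle q_{0}-q_j,\dot q_{0}-\dot q_j\rangle/\ell_{0j}$ of the interior edge lengths, and for the corresponding family of simplicial complexes in $\R^3$ every interior edge keeps total dihedral angle $2\pi$; hence $J\,u=0$. If $J$ is invertible, then $u=0$, so the deformation preserves the length of \emph{every} edge of the triangulation of $P$. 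Then it is infinitesimally rigid by the standard argument for triangulated balls: on each tetrahedron the deformation is an infinitesimal isometry of $\R^3$, two adjacent tetrahedra share a non-degenerate triangle hence carry the same Killing field, and $P$ is connected. So everything is reduced to proving that $J$ is non-degenerate.

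\emph{Identification with Izmestiev's convex-cap Hessian.} Radial projection from $v_0$ sends $\partial P$ bijectively onto a domain of the unit sphere, and composing with a projective transformation turns the triangles of $\partial P$ not containing $v_0$ into a polyhedral graph over a convex polygon $\Omega$ in the plane: the vertices $v_j$ not adjacent to $v_0$ become the interior marked points, the vertices adjacent to $v_0$ become the vertices of $\partial\Omega$, and $v_0$ goes ``to infinity''. Weak convexity of $P$ — the vertices being in convex position — is precisely what guarantees that this graph is a (generalised) convex cap, i.e.\ that it is locally convex in the radial direction at each marked point. Under this dictionary the radial lengths $\ell_{0j}$ correspond, through a change of variables with non-degenerate Jacobian, to the heights $h_j$ of the cap, and the angle defects $\kappa_j$ correspond to the cap curvatures $\omega_j$, while the lengths $\ell_{0i}$ with $v_i$ adjacent to $v_0$ are fixed, matching the convex-cap setting with prescribed boundary heights. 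Izmestiev's result on the geometry of convex caps states that for such a cap the matrix $\big(\partial\omega_j/\partial h_i\big)$ is definite; pulling back, $J$ is non-degenerate, and the proof is complete.

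\emph{Main obstacle.} The delicate point is the third paragraph: making the correspondence between the deformations of the cone decomposition of $P$ and Izmestiev's convex caps completely precise — in particular checking that weak convexity of $P$ is exactly the hypothesis under which Izmestiev's non-degeneracy statement applies, even though $P$ itself need not be convex, so that one must work with \emph{generalised} convex caps and verify the local convexity condition at each vertex from the fact that the vertices lie on a convex polyhedron. Keeping track of the boundary vertices (those adjacent to $v_0$) and of the projective normalisation that sends $v_0$ to infinity, and checking that the change of variables relating radial lengths to heights and angle defects to curvatures is invertible, are the remaining technical points; the Schläfli symmetry of $J$ and the gluing argument at the end are routine.
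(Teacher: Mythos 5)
Your setup, the reduction to non-degeneracy of the Jacobian $J=\bigl(\partial\kappa_j/\partial\ell_{0i}\bigr)$, and the projective change of picture sending $v_0$ to infinity all match the paper's framework (this is Remark \ref{rk:rigid} together with Lemma \ref{lm:darboux}). But your third paragraph contains a genuine gap, not just a technical point to be tidied up. You assert that weak convexity of $P$ (vertices in convex position) ``is precisely what guarantees'' that the projected object is a convex cap to which Izmestiev's theorem applies. That is false: Izmestiev's hypothesis is local convexity of the cap surface at its edges, and his proof of positive definiteness of $\bigl(\partial\omega_j/\partial h_i\bigr)$ rests on an explicit computation of the coefficients, $a_{ij}=-(\cotan\alpha_{ij}+\cotan\alpha_{ji})/l_{ij}\sin^2\rho_{ij}$, whose signs -- and hence the diagonal dominance giving definiteness -- require $\alpha_{ij}+\alpha_{ji}\le\pi$ at every edge. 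A weakly convex star-shaped polyhedron can perfectly well have reflex edges, and after the projective map the resulting ``hat'' is only weakly convex: its vertices are extreme points of the convex hull of its shadow, but the surface need not be locally convex. So Izmestiev's statement does not apply to it directly, and no change of variables or careful bookkeeping will make it apply.

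Bridging exactly this gap is the actual content of the paper. The argument there is: complete the weakly convex hat $H$ to the convex hat $H_c$ bounding the convex hull of $Sh(H)$ by adding finitely many simplices, each projecting to the horizontal plane as a quadrilateral (Lemma \ref{lm:codecomp}); observe that removing such a simplex changes the Hessian $\Lambda$ by adding a $4\times4$ matrix $M_S$ (Lemma \ref{lm:matrice}); and prove -- by a Killing-field dimension count plus a sign check on one symmetric example -- that $M_S$ is always positive semi-definite of rank $1$ (Lemma \ref{lm:simplexe}). Hence $\Lambda_H$ inherits positive definiteness from the convex case $\Lambda_{H_c}$, which is where Izmestiev is actually used. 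Your proposal identifies the right target matrix and the right projective normalization, but the step you defer as the ``main obstacle'' is precisely the new argument needed, and as stated your reduction to Izmestiev's convex-cap theorem would fail whenever $P$ has a non-convex edge.
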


Here by ``star-shaped'' with respect to $v_0$ we mean that the
interior of $P$ has a decomposition as the union of finitely many
non-degenerate simplices, all containing $v_0$ as one of their
vertices, of disjoint interior, and such that the intersection 
of each with $P$ is a face of both.

\subsection{A refined statement.}

There is a slightly refined version of Theorem \ref{tm:main}, 
giving a better understanding on the reasons for which rigidity holds.
Let $P$ be star-shaped with respect to a vertex $v_0$, and let
$P=S_1\cup\cdots\cup S_N$ be a triangulation of $P$ as a union
of non-degenerate
simplices all containing $v_0$ (and having disjoint interior).
Let $e_1, \cdots, e_m$ be the interior edges of this triangulation,
i.e., the edges of the $S_j$ which are not contained in faces 
of $P$. Let $l_i$ be the length of $e_i$.

It is then possible to consider a wider class of (small) deformations
of the metric on the interior of $P$: those for which the $l_i$ vary, while 
the length of the edges of $P$ remain constant. Such a 
variation of the $l_i$ determines a unique deformation of
the metric on the $S_j$, which can however still be glued
isometrically along their common faces. Under such a 
variation, cone singularities might appear along the $e_i$:
the angles around those edges might become different from $2\pi$.
We call $\theta_i$ the angle around $e_i$. 

\begin{defi}\label{df:lambda}
Let
$$ \Lambda_P := 
\left(\frac{\dr \theta_i}{\dr l_j}\right)_{1\leq i,j\leq m}~. $$
\end{defi}

Note that $\Lambda_P$ {\it a priori} depends also on the decomposition
$P=S_1\cup\cdots\cup S_N$ (and of the labeling of the $e_j$). It
is well-known that $\Lambda_P$ is symmetric
(see e.g. \cite{vienna}), this follows from the fact that 
$\Lambda_P$ is minus the Hessian of the total
scalar curvature of the metrics obtained by varying the $l_i$.
The following statement is also well known.

\begin{remark} \label{rk:rigid}
$P$ is infinitesimally rigid if and only if $\Lambda_P$ is
non-degenerate.
\end{remark}

The proof is elementary: isometric first-order deformations of $P$
correspond precisely to first-order variations of the $l_i$
which do not change, at first order, the $\theta_i$. Although the
proof of this point requires some care, we do not include one here
and refer the reader to \cite{izmestiev,bobenko-izmestiev} where
a similar problem is treated in full details.

\begin{thm} \label{tm:lambda}
Under the hypothesis of Theorem \ref{tm:main}, $\Lambda_P$
is positive definite.
\end{thm}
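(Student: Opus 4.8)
The strategy I would pursue is to reduce the statement about the polyhedron $P$ to a statement about the convex cap obtained by "developing" the star-shaped decomposition from the apex $v_0$, and then to invoke Izmestiev's Hessian formula for convex caps. Concretely, since $P$ is star-shaped with respect to $v_0$ and $P=S_1\cup\cdots\cup S_N$ with every $S_j$ a non-degenerate simplex containing $v_0$, each simplex $S_j$ is the cone over a boundary triangle $T_j$ with apex $v_0$; the combinatorial data of the link of $v_0$ is a triangulated disk (or sphere) whose vertices are the other endpoints of the interior edges $e_1,\dots,e_m$ together with the boundary vertices of $P$. Because $P$ is weakly convex, the vertices of $P$ lie on a convex polyhedron, and one can choose coordinates so that, viewed from $v_0$, the boundary $\dr P\setminus(\text{faces at }v_0)$ is a \emph{convex cap} over the link of $v_0$: the radial distances $r_i=l_i$ from $v_0$ to the vertices, together with the angles of the triangles $T_j$ at $v_0$, are exactly the data of a convex cap in the sense of Izmestiev. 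The key point is that varying the interior lengths $l_i$ while keeping the edge lengths of $P$ fixed corresponds \emph{precisely} to varying the radii of Izmestiev's convex cap while keeping fixed the combinatorics and the induced metric on the "outer" boundary — and under this dictionary the cone angles $\theta_i$ around the $e_i$ correspond to the angles $\omega_i$ around the vertices of the cap (more precisely to $2\pi$ minus the singular curvature that Izmestiev controls).

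The heart of the argument is then the following: Izmestiev proves that the matrix of partial derivatives of the cap angles with respect to the radii is positive definite (this is the linear-algebraic core of his new proof of Alexandrov's theorem, obtained by explicitly computing the relevant Hessian and showing it is, up to sign and a positive diagonal conjugation, a "discrete Laplacian" with nonpositive off-diagonal entries and the right boundary behavior, hence positive definite). I would quote this as the black box. So the steps are: (1) set up the radial decomposition of $P$ from $v_0$ and identify the link of $v_0$ with the base of a convex cap; (2) check that weak convexity of $P$ guarantees that this cap is genuinely convex (the dihedral angles along the $e_i$ coming from the convex position of the vertices are $\leq\pi$ in the relevant sense, i.e.\ the cap condition holds), and that non-degeneracy of the simplices $S_j$ gives the non-degeneracy hypothesis in Izmestiev's theorem; (3) match the variational data: $\dr\theta_i/\dr l_j$ on the $P$ side equals (up to an explicit positive diagonal conjugation, coming from the fact that $l_i$ is the radius while Izmestiev may use $r_i^2/2$ or similar as variable) the corresponding entry of Izmestiev's matrix; (4) conclude $\Lambda_P$ is positive definite, which by Remark \ref{rk:rigid} also re-proves Theorem \ref{tm:main}.

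The main obstacle I anticipate is step (3), the precise dictionary between the two variational problems. One has to be careful that: (a) the faces of $P$ adjacent to $v_0$ play the role of the "vertical wall" or the fixed boundary of the cap, and that keeping the edge lengths of $P$ fixed really does translate into keeping Izmestiev's boundary data fixed while only the radii move; (b) the sign conventions match, so that \emph{positive} definiteness — not negative — comes out, which is where the remark that $\Lambda_P$ is minus the Hessian of the total scalar curvature is a useful consistency check; and (c) the change of variables between $l_i$ and whatever radial parameter Izmestiev uses is by a positive diagonal matrix, so that definiteness is preserved. A secondary subtlety is the boundary vertices of $P$ itself (the endpoints of edges of $P$ other than $v_0$): these are \emph{not} among the $e_i$, their radial lengths are held fixed, so they should be treated as boundary vertices of the cap with prescribed radius; one must make sure Izmestiev's theorem is stated, or can be adapted, to the situation where some cap vertices have fixed radius and only the others vary — equivalently that the relevant principal submatrix of his positive definite matrix is extracted, which is automatic since principal submatrices of positive definite matrices are positive definite.
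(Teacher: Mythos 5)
Your step (2) is where the argument breaks down, and it is not a technical detail but the central difficulty of the paper. Weak convexity of $P$ only says that the \emph{vertices} of $P$ are in convex position; the boundary surface of $P$ seen from $v_0$ may perfectly well have reflex (concave) dihedral angles along edges not containing $v_0$. So the object you obtain by your radial construction is not a convex cap in Izmestiev's sense, and his positive-definiteness result cannot be quoted as a black box: his proof rests on the explicit cotangent formula for the off-diagonal entries, whose signs (and hence the diagonal dominance) depend on the convexity of the edges of the cap. If $P$ were genuinely convex your reduction would essentially work, but that is the already known case. The whole point of the paper is to bridge the gap from convex to weakly convex, which it does by an extra argument (Lemmas \ref{lm:codecomp}, \ref{lm:matrice} and \ref{lm:simplexe}): a weakly convex hat is completed to a convex one by gluing finitely many simplices, and removing such a simplex changes the matrix by adding a positive semi-definite rank-one matrix, so positive definiteness survives (Lemma \ref{lm:cvx-weak}). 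Your proposal contains no substitute for this step.

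There is a second, independent gap in your step (3). Izmestiev's variational problem is about caps over a horizontal plane, with \emph{heights} of vertical edges as variables, while your radial problem at $v_0$ has the lengths $l_i$ of cone edges as variables. The passage from one to the other is not a positive diagonal change of variables; it is a projective transformation sending $v_0$ to vertical infinity, which distorts lengths and angles, and there is no evident congruence between $\Lambda_P$ and the hat matrix $\Lambda_H$. The paper deliberately avoids claiming such an identity: it deforms $P$ through a family $\phi_t$ of projective maps, uses Theorem \ref{tm:main} (infinitesimal rigidity of each $P_t$, itself proved via the Darboux--Sauer projective invariance of Lemma \ref{lm:darboux}) to keep $\Lambda_{P_t}$ non-degenerate, concludes that its signature is constant, and identifies the limit $\lim_{t\to 1}\Lambda_{P_t}=\Lambda_H$, which is positive definite by Lemma \ref{lm:cvx-weak}. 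If you want to salvage your plan, you would have to either prove an honest dictionary between the two quadratic forms (which is not just a diagonal conjugation) or adopt a continuity-of-signature argument of this kind; and in either case you still need the weakly-convex extension of Izmestiev's result described above.
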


\subsection{A word on the proof.}

The proof is only indirectly related to the argument used in 
\cite{rcnp}, and different from those used in \cite{vienna}.
It is based on a recent result of Izmestiev \cite{izmestiev}, who
gives a new proof of Alexandrov's theorem on the existence 
and uniqueness of 
a polyhedral convex cap with a given induced metric, based on
the concavity of a geometric function. We slightly extend his
argument, to encompass weakly convex ``caps'', by proving that
``removing'' a simplex to a (weakly) convex cap actually makes
this function ``more'' concave -- a point which we found somewhat
surprising.
We then use a
classical projective argument to obtain Theorem \ref{tm:main}.
Theorem \ref{tm:lambda} follows from the same arguments.

\section{Weakly convex hats}

We use a notion of ``convex cap'' which is a little different
from the one used by Izmestiev \cite{izmestiev}. We will use a 
different name to avoid ambiguities. In the whole paper we
consider a distinguished oriented plane, which can for
convenience be taken to be the horizontal plane $\{ z=0\}$. 
We call $\R^3_+$ the half-space bounded by this plane $\{ z=0\}$
on the the side of its oriented normal.

\begin{defi}
Let $E\subset \R_+^3$, its {\it shadow} $Sh(E)$ is the set of
points $m\in \R^3_+$ for which there exists a point $m'\in E$
such that $m$ is contained in the segment joining $m'$ to
its orthogonal projection on the horizontal plane $\{ z=0\}$.
\end{defi}

\begin{defi}
A {\bf convex hat} is a polyhedral surface $H$ in $\R_+^3$ such that 
\begin{enumerate}
\item $H$ is homeomorphic to a disk and has finitely many vertices,
\item no point of $H$ is in the shadow of another,
\item $H$ is contained in the boundary of the convex hull of 
$Sh(H)$.
\end{enumerate}
\end{defi}

Note that we do {\it not} demand that $Sh(H)$ is convex, and that the
projection of $H$ on $\{ z=0\}$ is not necessarily convex.

\begin{defi}
A {\bf weakly convex hat}  is a polyhedral surface $H\subset \R_+^3$,
satisfying conditions (1) and (2) of the previous definition, and
such that every vertex of $H$ is an extremal point of the convex
hull of $Sh(H)$.
\end{defi}

A convex hat can be obtained by the following procedure. Start
from a convex polyhedron $P$, and apply a projective transformation
sending one of the vertices, $v$, to infinity in the vertical
direction (towards $z\rightarrow -\infty$). Then remove all
edges and faces adjacent to $v$. Any convex hat which has as 
its projection
on $\{ z=0\}$ a convex polygon can be obtained
in this manner. In the same way, one can start from a weakly
convex polyhedron which is star-shaped with respect to one of
its vertices, say $v_0$, and apply a projective map sending
$v_0$ to infinity, to obtain a weakly convex hat. Any 
weakly convex hat can be obtained in this manner.

The proof of Theorem \ref{tm:main} will follow from the following
lemma, using classical arguments relating projective transformations
to infinitesimal rigidity.

\begin{lemma}\label{lm:main}
Let $H$ be a weakly convex hat. Any first-order isometric
deformation of $H$ which fixes the heights of the boundary
vertices is trivial.
\end{lemma}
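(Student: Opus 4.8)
The plan is to follow Izmestiev's strategy for the existence-and-uniqueness of convex caps, but run it in the infinitesimal (linearized) regime and extend it to the weakly convex setting. Fix a weakly convex hat $H$ with vertices $v_0,\dots,v_n$ and boundary vertices among them; write $h_i$ for the height of $v_i$ and, as in Definition \ref{df:lambda}, consider the interior edges of a triangulation of the ``solid hat'' obtained by coning $H$ to a point far below in the vertical direction (equivalently, the simplices $S_j$ coming from the star-shaped decomposition of the original polyhedron $P$). The key object is Izmestiev's function: for fixed combinatorics, assign to the heights $h=(h_i)$ the (unique) Euclidean metric on the solid hat with the prescribed face metrics on $H$ and the prescribed heights, and let $S(h)$ be (a suitable variant of) the total scalar curvature, i.e. $\sum_k (2\pi - \theta_k)\,\ell_k$ over interior edges, so that $\partial S/\partial h_i$ records the cone angles. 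Izmestiev's theorem is that for \emph{convex} caps $S$ is concave in $h$, with Hessian $-\Lambda$ essentially the matrix of Definition \ref{df:lambda}; an isometric first-order deformation of $H$ fixing boundary heights is exactly a kernel vector of this Hessian (this is Remark \ref{rk:rigid} in the ``hat'' picture).

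The first step is therefore to set up this dictionary carefully: translate the statement ``first-order isometric deformation of $H$ fixing boundary heights is trivial'' into ``the Hessian $\hess S$, restricted to variations of the interior heights, is nondegenerate,'' using the correspondence between infinitesimal isometric deformations and infinitesimal flexes of the associated framework of interior edges. Second, I would recall (or lightly re-derive) Izmestiev's explicit formula for $\hess S$ as a sum of contributions, one per simplex $S_j$, each a rank-controlled negative-semidefinite matrix expressed through the dihedral angles and the geometry of $S_j$ — crucially, the formula is \emph{local}: it does not see whether the cap is globally convex, only the individual simplices. Third — and this is the point the author flags as surprising — I would show that passing from a convex hat to a weakly convex one, which amounts to allowing some of the simplices $S_j$ to be attached ``on the wrong side'' (the non-convex edges of $P$), changes the sign of the corresponding local contributions in a way that makes $\hess S$ \emph{more} negative definite rather than less: removing a simplex from a convex cap adds a positive-definite term to $\Lambda_P = -\hess S$. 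Granting that monotonicity, $\Lambda_P$ for the weakly convex hat dominates the $\Lambda_P$ of an honestly convex cap with the same boundary, which Izmestiev's theorem already knows is positive definite; hence $\Lambda_P$ is positive definite, its kernel is zero, and the isometric deformation is trivial. This simultaneously yields Theorem \ref{tm:lambda}.

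The main obstacle is the third step: making precise what ``removing a simplex'' means at the level of the height function and its Hessian, and checking the sign. Concretely, a non-convex edge $e_k$ of $P$ corresponds, after the projective map, to a simplex $S_j$ whose dihedral contribution enters $\Lambda_P$ with the opposite orientation compared to the convex case; one must verify that the relevant per-simplex matrix — something like a Gram-type matrix built from the outward data of $S_j$, cf. the computations in \cite{izmestiev,bobenko-izmestiev} — is positive semidefinite on its own, so that adding it (rather than subtracting it) only helps. This requires re-examining Izmestiev's local computation sign-by-sign and confirming that the half-space structure ($\R^3_+$, the shadow condition) guarantees all the simplices are non-degenerate and the projection step is legitimate. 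A secondary, more bookkeeping-type difficulty is handling the boundary: the deformation is only required to fix boundary \emph{heights}, not the boundary entirely, so one must check that the restriction of $\hess S$ to the subspace of interior-height variations is the object that is definite, and that nothing is lost by this restriction — this is routine once the convex case is cited, since the obstacle to rigidity lives entirely in the interior edges. Finally, the passage from Lemma \ref{lm:main} back to Theorem \ref{tm:main} is the ``classical projective argument'' the introduction promises: a projective transformation sending $v_0$ to vertical infinity is an infinitesimal-isometry-preserving correspondence between flexes of $P$ fixing nothing and flexes of $H$ fixing boundary heights, so triviality of the latter gives triviality of the former.
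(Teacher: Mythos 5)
Your overall strategy is the same as the paper's: start from Izmestiev's positive definiteness of $\Lambda$ for convex hats, show that passing to a weakly convex hat amounts to removing simplices, prove that each removal adds a positive semidefinite correction to $\Lambda$, and conclude nondegeneracy, hence rigidity with fixed boundary heights. However, the crux of the argument --- the positive semidefiniteness of the per-removed-simplex matrix (the paper's $M_S$, Lemma \ref{lm:simplexe}) --- is only flagged as ``the main obstacle'' and never actually established. Your suggested route to it also rests on a misconception: you assert that Izmestiev's Hessian decomposes as a sum of per-simplex negative semidefinite contributions and that ``the formula does not see whether the cap is globally convex.'' That cannot be right: the per-prism blocks of $\hess S$ are not semidefinite in general, and Izmestiev's positivity proof (Lemma \ref{lm:lambda-hat}) uses convexity essentially, through the signs of $\cotan(\alpha_{ij})+\cotan(\alpha_{ji})$ at the interior edges, which give diagonal dominance only when the dihedral angles at those edges are convex. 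If the per-simplex locality you invoke were literally true, the weakly convex case would follow with no monotonicity argument at all, and the phenomenon the paper calls surprising would be vacuous. The paper's actual proof of Lemma \ref{lm:simplexe} is quite different from a ``sign-by-sign re-examination'' of Izmestiev's cotangent formulas: for a simplex projecting to a quadrilateral, the $4\times 4$ matrix $M_S$ has in its kernel the $3$-dimensional space of height variations induced by Killing fields not preserving $\{z=0\}$, so its rank is at most $1$; the rank cannot be $0$ because that would produce a nontrivial isometric first-order deformation of the simplex; hence the signature is constant on the connected family of such simplices, and it suffices to check positivity on one symmetric example, which is done by an explicit deformation.

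A second, smaller gap: you take for granted that a weakly convex hat is obtained from a convex one by removing finitely many simplices, each in the right position. The paper proves this (Lemma \ref{lm:codecomp}) by repeatedly choosing a concave edge, adding the simplex spanned by the two adjacent upper faces (which then projects to a quadrilateral --- exactly the condition that makes $M_S$ a $4\times 4$ matrix in the interior-vertex heights), and showing the process terminates; some argument of this kind is needed, since the intermediate hats must remain weakly convex and the added simplices must be nondegenerate. Your reduction of the rigidity statement to nondegeneracy of the Hessian restricted to interior heights, and the concluding projective remarks (which belong to Theorem \ref{tm:main}, not to this lemma), are fine and match the paper.
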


The proof relies on an extension of the notion of (weakly) convex 
cap, to allow for cone singularities along vertical edges, a 
device which is common in the field known as ``Regge calculus'',
or more specifically e.g. in \cite{bobenko-izmestiev},
\cite{izmestiev}, or in the last part of \cite{vienna}.
This type of construction has been used successfully also in the
contexts of hyperbolic polyhedra or circle patterns on surfaces, 
see e.g. \cite{Ri2,rivin-annals,leibon1,bobenko-springborn,rcnp,cpss}.

\begin{defi}
A {\it prism} is a non-degenerate convex polyhedron $P$ in 
$\R^3_+$ which is the shadow of a triangle in $\R^3_+$.
\end{defi}

The faces which are neither the bottom or the upper face of 
$P$ are its {\it vertical} faces. It is not difficult to check
that a prism is uniquely determined, among prisms with the same
induced metric on the upper face, by the heights of its vertical
edges.

\begin{defi}
A {\it generalized hat} is a metric space obtained 
from a finite set of prisms $P_1, \cdots, P_N$ by
isometrically identifying some of their vertical faces, so 
that
\begin{itemize}
\item each vertical face is glued to at most one other, so
that singularities occur only at line segments corresponding
to some vertical edges of the $P_i$,
\item the prisms containing a given vertical edge are pairwise
glued along vertical faces in a cyclic way (with either all 
vertical faces containing the given vertical edges pairwise 
glued, for an interior edge, or with two faces not glued and
corresponding to vertical boundary faces of the generalized hat,
for a boundary edge),
\item under the gluing of two vertical faces, 
the segments corresponding to the
bottom (resp. upper) face of the $P_i$ are identified.
\end{itemize}
\end{defi}

Given a convex or weakly convex hat $H$, it can be used to
construct a generalized hat $G$ by gluing the shadows of 
the faces of $H$. Moreover it's easy to
characterize the generalized hats obtained in this manner. It
is necessary that the angles around all interior ``vertical''
edges are equal to $2\pi$; under this condition, generalized
hats admit an isometric immersion into $\R^3_+$, with their
bottom faces sent to $\{ z=0\}$, and a generalized hat $G$ is
obtained from a (weakly) convex hat $H$ if and only if this
image in $\R^3$ is embedded and (weakly) convex.
This simple construction
allows us to consider convex or weakly convex hats as special
cases of generalized hats.

We define a generalized hat to be {\it convex} if it is convex at each
edge $e$ which is shared by the upper faces of two of the prisms
$P_i$ and $P_j$,
i.e., if the angles at $e$ of $P_i$ and $P_j$ add up
to at most $\pi$. It is {\it strictly convex} if those angles
add up to strictly less than $\pi$.

Given a generalized hat $G$, one can consider the space $\cM_G$ 
of all generalized hats for which the upper boundary has the
same combinatorics and the same induced metric. It is not 
difficult to check that $\cM_G$ is parametrized by the 
heights of the vertical edges $h_1, \cdots, h_n$. 
We call $\cM_{G,0}$ the subspace of $\cM_G$ of 
generalized hat having the same boundary heights as $G$,
so that $\cM_{G,0}$ is parametrized by the heights of the
interior vertical edges, $h_1, \cdots, h_m$.

\section{The rigidity of convex hats}

We mainly recall in this section results of Izmestiev \cite{izmestiev},
adapting the arguments to the proof of Lemma \ref{lm:main} for the
special case of convex hats. In the next section it is shown how 
the argument can be extended to weakly convex hats.

The proof is based on a matrix very similar to the matrix $\Lambda_P$
appearing in Definition \ref{df:lambda}. We consider a convex hat $H$,
and the corresponding generalized hat $G$. Since a prism, with given 
induced metric on its upper face, is uniquely determined by the
heights of its vertices, elements of $\cM_{G,0}$ are 
uniquely determined by the heights of the interior vertices. Conversely,
each choice of those heights, close to the heights of the interior
vertices in $H$, determines an element of $\cM_{G,0}$. We call 
$e_1, \cdots, e_m$ the vertical edges ending at interior points
of $H$, $(h_i)_{1\leq i\leq m}$ their heights, and $(\theta_i)_{1\leq
i\leq m}$ the angle around them. So the $\theta_i$ are equal to 
$2\pi$ for $G$, but not necessarily at other points of $\cM_{G,0}$.

\begin{lemma}[Izmestiev \cite{izmestiev}] \label{lm:lambda-hat}
Let 
$$ \Lambda_G := 
\left(\frac{\dr \theta_i}{\dr h_j}\right)_{1\leq i,j\leq m}~. $$
Then $\Lambda_G$ is symmetric and positive definite.
\end{lemma}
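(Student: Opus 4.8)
The plan is to recall Izmestiev's variational argument \cite{izmestiev}, recast in the prismatic language of the previous section. Recall that $\cM_{G,0}$ is an open neighbourhood of $G$ in $\R^m$ with coordinates the heights $h_1,\dots,h_m$ of the interior vertical edges (the boundary heights being frozen), and that a prism with a fixed induced metric on its upper face depends smoothly on the three heights of its vertical edges; hence the $\theta_i$ are smooth functions of $(h_1,\dots,h_m)$ and $\Lambda_G$ is well defined. The first step is to exhibit a geometric functional $\Phi\colon\cM_{G,0}\to\R$, built from the volumes $\vol(P_j)$ of the prisms together with the heights and dihedral angles of their vertical edges (Izmestiev's ``discrete total scalar curvature''), such that
$$ \frac{\dr \Phi}{\dr h_i} \;=\; 2\pi-\theta_i\qquad(1\le i\le m)~. $$
The verification of this gradient formula is a computation combining the elementary identity $\vol(P_j)=\frac13\,(\text{area of the base of }P_j)\cdot(\text{average of its three heights})$ with the Euclidean Schl\"afli identity $\sum_e\ell_e\,d\varphi_e=0$ applied prism by prism: the contributions of the volume terms and of the terms $h_i\,d\theta_i$ cancel, leaving only $-\theta_i\,dh_i$ up to a combinatorial constant. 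Granting this, $\Lambda_G=-\hess\,\Phi$; in particular $\Lambda_G$ is symmetric, and proving that it is positive definite amounts to proving that $\Phi$ is strictly concave on $\cM_{G,0}$.

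The second step, which is the heart of the matter, is this strict concavity. The key structural point is that $\hess\,\Phi$ is a sum of local contributions, one per prism,
$$ \hess\,\Phi \;=\; \sum_{j=1}^N \iota_j^{*}\bigl(\hess\,\Phi_j\bigr)~, $$
where $\Phi_j$ is the local functional attached to $P_j$ (its volume, corrected by the relevant height/angle terms), viewed as a function of the three heights of its vertical edges, and $\iota_j\colon\R^m\to\R^3$ reads off those three heights, with a zero entry at any vertical edge of $P_j$ lying on the boundary (whose height is frozen in $\cM_{G,0}$). Thus it suffices to establish a purely local statement: for a single prism, $\hess\,\Phi_j$ is negative semidefinite, with kernel exactly the line $\R\cdot(1,1,1)$. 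The inclusion of $(1,1,1)$ in the kernel is geometrically transparent — adding a common constant $t$ to the three heights translates the prism vertically, leaving its dihedral angles unchanged and altering $\vol(P_j)$ and $\Phi_j$ only by affine functions of $t$. That there is nothing more in the kernel, and that the form is negative on the complementary directions, is where the genuine (and, as the introduction remarks, slightly surprising) geometric input enters: one parametrises the base triangle of $P_j$ by the fixed upper side lengths and the three height differences, writes $\Phi_j$ explicitly, and checks the sign of the resulting $3\times 3$ Hessian, the non-degeneracy of the prism — its upper face being a genuine triangle strictly above the distinguished plane — being what forces the form to be definite transverse to $(1,1,1)$. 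I expect this prism computation, which is essentially Izmestiev's computation for the tetrahedra of a convex cap, to be the main obstacle.

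Finally one assembles. Let $u\in\R^m$ with $\hess\,\Phi(u,u)=0$. By the local statement, $\iota_j(u)\in\R\cdot(1,1,1)$ for every $j$. Any prism $P_j$ having a vertical edge on the boundary then has a zero among the entries of $\iota_j(u)$, forcing $\iota_j(u)=0$, so $u$ vanishes at every interior vertical edge of that prism; propagating this across prisms sharing an interior vertical edge, and using that the prism complex coming from the disk $H$ is connected and that $\dr H\neq\emptyset$ (so at least one boundary vertical edge exists and belongs to some prism), one gets $u=0$. Hence $\hess\,\Phi$ is negative definite, i.e. $\Lambda_G=-\hess\,\Phi$ is positive definite, which completes the proof. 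As noted, essentially all of this is contained in \cite{izmestiev}, the only novelty here being the translation into the prismatic setting used below.
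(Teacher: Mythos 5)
Your first step (the existence of a functional $\Phi$ with $\dr\Phi/\dr h_i=2\pi-\theta_i$, hence the symmetry of $\Lambda_G=-\hess\Phi$) is fine and is indeed how Izmestiev, and the paper, obtain symmetry. The gap is your ``purely local statement'': it is false that the Hessian contribution of a single prism is negative semidefinite. Because each prism's bottom face is horizontal and its walls vertical, the dihedral angle $\omega_a$ of a prism along its vertical edge through $v_a$ is just the angle of its base triangle at the projection of $v_a$, and the prism Schl\"afli identity forces the prism's contribution to $\Lambda_G$ to be $M^P=(\dr\omega_a/\dr h_b)_{1\le a,b\le 3}=-\hess\Phi_j$. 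Test this on the prism whose upper face is an equilateral triangle of side $1$ with heights $h_1=h_2=H$, $h_3=H+t$, $0<t<\sqrt3/2$: the base has sides $b_{12}=1$, $b_{13}=b_{23}=\sqrt{1-t^2}$, and using $d\omega_1=\frac{b_{23}}{2S}\bigl(db_{23}-\cos\omega_3\,db_{13}-\cos\omega_2\,db_{12}\bigr)$ with $db_{ij}=-\frac{h_i-h_j}{b_{ij}}(dh_i-dh_j)$ one finds $\dr\omega_1/\dr h_2=\frac{t}{2S}>0$ and $\dr\omega_1/\dr h_3=\dr\omega_2/\dr h_3=\frac{t(\cos\omega_3-1)}{2S}<0$, where $S$ is the area of the base. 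Since $M^P$ is symmetric with zero row sums, $x^TM^Px=-\sum_{a<b}m_{ab}(x_a-x_b)^2$; taking $x=(1,-1,0)$ gives $-\frac{t(1+\cos\omega_3)}{S}<0$, while $x=(1,1,-2)$ gives $\frac{9t(1-\cos\omega_3)}{S}>0$. So $M^P$ is indefinite (signature $(+,-,0)$), its kernel is indeed $\R\cdot(1,1,1)$, but it is not semidefinite, and no choice of ``volume plus height/angle corrections'' attached to the prism alone can change this, since the gradient requirement pins the Hessian down to $-M^P$.

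The failure is structural, not a technicality: concavity of Izmestiev's functional is not a prism-by-prism phenomenon. In his actual argument (which the paper only outlines), the second derivatives are computed explicitly, and the off-diagonal coefficient attached to an interior upper edge is $-(\cotan\alpha_{ij}+\cotan\alpha_{ji})/(l_{ij}\sin^2\rho_{ij})$; its sign is controlled by $\alpha_{ij}+\alpha_{ji}\le\pi$, i.e.\ by convexity of the hat at the edge shared by the \emph{two} adjacent prisms, and positive definiteness then follows from diagonal dominance (using $a_{ii}=-\sum_{j\ne i}a_{ij}$ and the boundary rows). Your proposed local claim never uses convexity of the hat at all, so if it were true it would give $\Lambda_G>0$ for every generalized hat and, via the projective argument, infinitesimal rigidity of every polyhedron star-shaped with respect to a vertex, weakly convex or not -- far stronger than what is claimed anywhere, and it would make Section 4 of the paper (where adding or removing a simplex changes $\Lambda$ by a rank-one semidefinite matrix, a fact the author explicitly finds surprising) pointless. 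To repair the proof you must either reproduce the explicit cotangent computation and the diagonal-dominance argument, or find some grouping of terms in which convexity at the interior upper edges visibly enters; the per-prism grouping cannot work.
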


We only give a brief outline of the proof here. The symmetry of
$\Lambda_G$ follows from the fact that it is minus the Hessian of a
natural ``total scalar curvature'' function appearing in this
context, called $S$ in \cite{izmestiev}. The coefficients of 
$\Lambda_G$ are computed explicitly in \cite{izmestiev} (Proposition
4, note that the coefficients given there are minus the ones
considered here), they are equal to:
\begin{itemize}
\item $a_{ij}=0$ when $i\neq j$ and $e_i$ and $e_j$ are not the
endpoints of an interior edge of $H$.
\item $a_{ij}=-(\cotan(\alpha_{ij}) +
\cotan(\alpha_{ji)})/l_{ij}\sin^2(\rho_{ij})$ when $i\neq j$ but $e_i$
and $e_j$ are the two endpoints of an interior edge of $H$. Here $\alpha_{ij}$
and $\alpha_{ji}$ are the angles between the shadow of the edge 
of $G$ joining the endpoints of $e_i$ and $e_j$ with the two 
upper faces of $G$ adjacent to that edge, $l_{ij}$ is the length
of that edge, and $\rho_{ij}$ is its angle with the vertical.
\item $a_{ii} = -\sum_{j\neq i} a_{ij}$.
\end{itemize}
It follows from this explicit description that $\Lambda_G$ has
dominant diagonal, and therefore that it is positive definite.

Remark \ref{rk:rigid} still applies in this context, so that it
follows from Lemma \ref{lm:lambda-hat} that convex hats are
infinitesimally rigid.

\section{Weakly convex hats are rigid}

The proof of Lemma \ref{lm:main} follows from Lemma \ref{lm:lambda-hat}
by a simple argument, remarking that (1) it is possible to go from a
weakly convex hat to a convex hat by adding a finite set of simplices
(which are in a specific position with respect to the vertical direction)
(2) when removing such a simplex, the matrix $\Lambda$ defined 
above becomes ``more'' positive.

\begin{lemma} \label{lm:codecomp}
Let $H$ be a weakly convex hat, and let $H_c$ be the convex hat
which is the union of the upper faces of the convex hull of $Sh(H)$
which project orthogonally to $\{ z=0\}$ as a polygon in the 
projection of $H$.
There exists a finite sequence $H_0, \cdots, H_p$ of weakly convex
hats in $\R^3_+$ such that
\begin{itemize}
\item $H_0=H$ and $H_p=H_c$,
\item for all $i\in \{ 1, \cdots, p\}$, $H_i$ has the same vertices
as $H_{i-1}$, and $Sh(H_i)$ is obtained from $Sh(H_{i-1})$ by 
gluing a simplex $S_i$,
\item the projection of $S_i$ on $\{ z=0\}$ is a quadrilateral.
\end{itemize}
\end{lemma}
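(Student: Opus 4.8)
The plan is to build the sequence $H_0,\dots,H_p$ by successively "filling in" the non-convex part of $H$ one simplex at a time, working from the convex hull inward in a controlled order. Since $H$ is a weakly convex hat, every vertex of $H$ is an extremal point of $C := \mbox{conv}(Sh(H))$, and $H_c$ is the part of $\dr C$ lying above the (polygonal) projection of $H$. The set $Sh(H_c) \setminus Sh(H)$ is the region trapped between the non-convex surface $H$ and the convex surface $H_c$; it is a polyhedral region all of whose vertices are vertices of $H$ (hence extremal in $C$), and it projects to a polygon. The idea is to triangulate this region into simplices each of which shares a face with the current surface, and to order these simplices $S_1,\dots,S_p$ so that at each step the intermediate surface $H_i$ (the upper boundary of $Sh(H_{i-1}) \cup S_i$) remains a weakly convex hat: it must still be embedded, graphical over its projection, have the no-shadow property, and have all its vertices extremal in $C$.

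First I would make precise the decomposition of the gap region $G := Sh(H_c)\setminus \mbox{int}(Sh(H))$. Each non-convex edge $e$ of $H$ (an edge where $H$ dips below its convex hull) bounds, together with the two adjacent faces of $H_c$ that "span" it, a local pocket; more globally, one can decompose $G$ by coning from a suitable vertex or by using the combined triangulation of $H$ and $H_c$ restricted to $G$. The key structural point to isolate is that each simplex $S_i$ of this decomposition has its projection to $\{z=0\}$ equal to a \emph{quadrilateral} rather than a triangle: this is where the vertical direction enters. A simplex $S_i$ meets the current surface $H_{i-1}$ along faces that are \emph{vertical} (shadows of edges) on the sides and along one \emph{upper} triangular face below, while its top triangular face becomes part of $H_i$; projecting, the two vertical side-faces collapse to edges and one obtains a quadrilateral as the projection of $S_i$. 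I would verify this by checking that a simplex glued to a graphical surface so as to keep it graphical, and glued along shadows on its lateral sides, necessarily has exactly two faces projecting to segments and two projecting to triangles sharing an edge, hence projects to a quadrilateral. Equivalently: $S_i$ is the shadow of a triangle minus a smaller shadow, i.e. the difference of two prisms over nested triangles sharing an edge-projection pattern that yields a quadrilateral base.

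The main obstacle I expect is the \textbf{ordering}: one must remove (equivalently, add, running the sequence backwards) the simplices of $G$ in an order that keeps every intermediate surface a \emph{weakly convex} hat — in particular keeping it embedded, graphical, and with all vertices extremal in the fixed convex body $C$. The extremality is automatic since no new vertices are introduced and the vertex set only ever shrinks toward that of $H_c$; the graphical/embedded property is the delicate part. Here I would use the convexity of $H_c$: order the simplices by "depth", filling first those adjacent to the outermost convex faces $H_c$ and proceeding inward, so that at each stage the still-unfilled part of $G$ lies on one side and the current surface is squeezed between $H$ (from inside $Sh(H)$) and the boundary of $C$ (from outside), forcing it to remain graphical with no self-shadowing. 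A clean way to run this induction is to pick, at each step, a simplex $S$ of the remaining decomposition of $G$ whose top face lies on the \emph{current} outer surface and whose removal leaves the complement of $G$ (within $Sh(H_c)$) still equal to $Sh$ of a weakly convex hat; existence of such an $S$ follows because the boundary complex of $G$ relative to $\dr Sh(H_c)$ is a ball, so it can be collapsed simplex-by-simplex. Finally, once the sequence is produced with each $S_i$ having quadrilateral projection, the stated properties ($H_0 = H$, $H_p = H_c$, same vertices, $Sh(H_i) = Sh(H_{i-1}) \cup S_i$) hold by construction, completing the proof.
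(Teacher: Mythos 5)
Your proposal takes a genuinely different route from the paper, and as written it has two real gaps. The paper never triangulates the gap region $G=Sh(H_c)\setminus Sh(H)$ in advance and then orders the pieces; it proceeds greedily from $H$ outward: at each step it chooses a concave interior edge $e$ of the current hat, takes the two adjacent upper faces $f,f'$ and their vertices opposite to $e$, and adds the simplex spanned by these four vertices. That simplex projects to the quadrilateral which is the union of the projections of $f$ and $f'$, the resulting surface is again a weakly convex hat on the same vertex set (the flip replaces the concave edge by the other diagonal), and the process terminates because the shadow strictly increases at each step while only finitely many simplices with vertices among the vertices of $H$ exist. This one move simultaneously produces the simplex, the quadrilateral projection, and the next weakly convex hat, which is exactly what your argument still has to supply.

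Concretely, the first gap is your unproved claim that $G$ admits a triangulation, without new vertices, into simplices each projecting to a quadrilateral: ``coning from a suitable vertex'' or restricting a combined triangulation of $H$ and $H_c$ does not obviously yield this (non-convex polyhedral regions need not even be triangulable without Steiner points, as Sch\"onhardt-type examples show), and your structural description of the pieces is inconsistent -- a simplex two of whose faces are vertical has a vertical edge (any two faces of a tetrahedron share an edge), hence only three distinct vertex projections, so it projects to a triangle, not a quadrilateral; in the correct picture the simplex is glued along the two upper faces adjacent to a concave edge and has no vertical faces. The second gap is the ordering step: the assertion that the relative ball ``can be collapsed simplex-by-simplex'' is not true of triangulated balls in general (non-shellable balls exist), and even granting collapsibility you would still need to show that every intermediate surface stays graphical, embedded and free of self-shadowing -- which is the heart of the lemma and is not addressed. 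The paper's concave-edge flip makes both difficulties disappear, which is why its proof is short.
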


\begin{proof}
Set $H_0:=H$, and choose a concave edge $e_0$ of $H_0$  (which is
thus not a boundary edge of $H_0$) with vertices
$v_0$ and $v_1$. Let 
$f$ and $f'$ be the faces of $H$ adjacent to $e_0$, and let $v_3$ and
$v_4$ be the vertices of $f$ and $f'$ opposite to $e_0$. Let $S_1$
be the simplex with vertices $v_0, v_1, v_2, v_3$, then $S_1$ 
projects to $\{ z=0\}$ as a quadrilateral. 

We can add to $Sh(H)$ the simplex $S_1$, this yields a polyhedron 
in $\R^3_+$ which is the shadow of a weakly convex hat $H_1$ (which
by construction has the same vertices as $H_0$). 

If $H_1$ is convex, the lemma is proved. Otherwise, $H_1$ has at
least one concave edge and one can choose one of those edges, say 
$e_1$, and repeat the construction, adding a simplex $S_2$. 

After a finite number of steps the weakly convex hat $H_p$ 
obtained in this way will be convex, because the number of 
simplices that can be added is bounded from above, for instance
by the number of Euclidean simplices having as vertices some
vertices of $H$.
\end{proof}

The next step is to describe in what manner the matrix $\Lambda$
associated to a weakly convex hat changes when a simplex is
removed. We consider a simplex $S$ with vertices $v_1, v_2,
v_3, v_4$ which projects on the plane $\{ z=0\}$ as a quadrilateral.
Then the boundary of $S$ is the union of two surfaces, each made
by gluing two triangles, and each of which has injective projection
on $\{ z=0\}$: the ``lower'' surface $S_-$, and the ``upper'' surface
$S_+$, with $S_-\subset Sh(S_+)$. 
We suppose for instance that $S_-$ is the union of the
triangles $(v_1,v_3,v_4)$ and $(v_2, v_3,v_4)$, while $S_+$
is the union of $(v_1,v_2,v_3)$ and $(v_1,v_2,v_4)$. Let $h_i$
be the height of $v_i$ over $\{ z=0\}$. Any first order variation
of the $h_i$, $1\leq i\leq 4$, determines a first-order displacement
of the $v_i$ which preserves the lengths of the five segments in 
$S_-$ (including the diagonal), which is unique up to horizontal
translation and rotation with vertical axis. Similarly a first-order 
variation of the $h_i$ determines a displacement of the $v_i$ which
preserves the lengths of the five segments in $S_+$. 

\begin{defi}
Let 
$$ M_S = \left(\frac{\dr (\theta_i^- - 
\theta_i^+)}{\dr h_j}\right)_{1\leq i,j\leq 4}~, $$
where, for heights of the $v_i$ close to the $h_i$, $\theta_i^+$ is
the angle of the projection of $S_+$ on $\{ z=0\}$ at the projection of
$v_i$, and $\theta^-_i$ is the angle of the projection of $S_-$ at the
projection of $v_i$.
\end{defi}

\begin{lemma} \label{lm:matrice}
Let $H$ and $H'$ be two weakly convex hats, with $Sh(H')$ obtained 
by removing from $Sh(H)$ a simplex $S$.
Then $\Lambda_{H'}$ is obtained by adding $M_S$ to $\Lambda_H$ (with
the lines/columns of $M_S$ added to the lines/columns of $\Lambda_H$
corresponding to the same vertices).
\end{lemma}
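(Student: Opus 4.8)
The statement is essentially an additivity claim: the matrix $\Lambda_H$ is built from purely local contributions attached to interior vertical edges, and removing the simplex $S$ from $Sh(H)$ modifies exactly the local data near the four vertical edges $e_1,\dots,e_4$ over $v_1,\dots,v_4$, in a way recorded by $M_S$. The proof I would give is therefore a bookkeeping argument organized around the explicit formula for the coefficients of $\Lambda$ recalled after Lemma~\ref{lm:lambda-hat}, together with the definition of $M_S$.

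\textbf{Step 1: Identify the angles that change.} First I would observe that the generalized hat $G'$ associated to $H'$ differs from $G$ only over the region that was occupied by $S$. Concretely, $Sh(H)$ and $Sh(H')$ coincide outside the vertical prism $Sh(S_+)$ over the quadrilateral $\pi(S)$, so every prism of $G'$ not lying over $\pi(S)$ is literally a prism of $G$, with the same heights and induced metric. The only change is that the prisms of $G$ lying over the two triangles of $\pi(S_+)$ (these are glued along the diagonal of $\pi(S_+)$, with upper faces making up $S_+$) are replaced in $G'$ by the prisms over the two triangles of $\pi(S_-)$, whose upper faces make up $S_-$. Hence, for any one-parameter family in $\cM_{G',0}$, realized by moving only the interior heights, the functions $\theta_i$ for $G'$ and for $G$ agree except that near $e_1,\dots,e_4$ the angular contribution coming from the two prisms over $\pi(S_+)$ (present in $G$) must be subtracted and the contribution from the two prisms over $\pi(S_-)$ (present in $G'$) added.

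\textbf{Step 2: Translate into the matrix entries.} Next I would make precise that $\theta_i$ is, by definition, the sum of the dihedral angles along the vertical edge $e_i$ of all prisms containing $e_i$; differentiating in the heights, $\partial\theta_i/\partial h_j$ is correspondingly a sum over those prisms. For the entries of $\Lambda_{H'}$ versus $\Lambda_H$ at indices corresponding to $v_1,\dots,v_4$ this gives
\begin{equation*}
(\Lambda_{H'})_{ij} - (\Lambda_H)_{ij}
= -\,\frac{\partial \phi^{+}_{i}}{\partial h_j}
 + \frac{\partial \phi^{-}_{i}}{\partial h_j}~,
\end{equation*}
where $\phi^{\pm}_i$ is the total dihedral angle at $e_i$ contributed by the two prisms whose upper faces triangulate $S_\pm$, and all other entries are unchanged. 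Now the dihedral angle of a prism along a vertical edge equals the angle of its (triangular) bottom face at the corresponding vertex, which equals the angle of the corresponding triangle of $\pi(S_\pm)$ at $\pi(v_i)$; summing the (at most two) such triangles at $\pi(v_i)$ gives precisely $\theta^{\pm}_i$ as in the definition of $M_S$. Here one has to be a little careful with the vertices $v_i$ that are endpoints of the diagonal of $\pi(S_+)$ (resp.\ $\pi(S_-)$): at such a vertex two triangles meet, and $\theta^{+}_i$ (resp.\ $\theta^{-}_i$) is the sum of their angles — exactly matching the sum of the two prism dihedral angles. So $-\partial\phi^{+}_i/\partial h_j + \partial\phi^{-}_i/\partial h_j = \partial(\theta^{-}_i - \theta^{+}_i)/\partial h_j = (M_S)_{ij}$, which is the claim.

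\textbf{The main obstacle.} The computations above are routine; the point that needs genuine care — and which I would spend most of the write-up on — is the matching of \emph{which} heights one is differentiating with respect to, i.e.\ that the ``displacement of the $v_i$ preserving the five lengths in $S_\pm$'' used to define $M_S$ is compatible with the global deformation of $G$ (resp.\ $G'$) obtained by moving interior heights in $\cM_{G,0}$ (resp.\ $\cM_{G',0}$). Since a prism is determined by the induced metric on its upper face and the heights of its vertical edges, and since gluing along vertical faces is exactly length-matching of the bottom/upper edges, a first-order variation of the four heights $h_1,\dots,h_4$ does determine the same first-order motion of $v_1,\dots,v_4$ whether one views the two triangles of $S_-$ (or of $S_+$) as faces of prisms in $G$ or as the isolated surface in the definition of $M_S$: in both cases the constraint is preservation of the same five edge lengths. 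One must also note that heights of boundary vertices are held fixed in $\cM_{G,0}$ and $\cM_{G',0}$, so if some $v_i$ is a boundary vertex the corresponding row/column contribution of $M_S$ is simply not used — consistent with the parenthetical in the statement about adding rows/columns ``corresponding to the same vertices.'' Once this identification of variables is nailed down, the additivity is immediate from Step~1.
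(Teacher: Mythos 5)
Your argument is correct and is essentially the paper's proof, which simply observes that $\Lambda_{H'}$ equals $\Lambda_H$ except that the variation of the angle at the vertical edges over the vertices of $S$ is governed by $S_-$ instead of $S_+$; you have merely expanded this one-line observation into explicit bookkeeping. The compatibility point you single out (that varying the heights $h_1,\dots,h_4$ induces the same first-order motion of $v_1,\dots,v_4$ whether the triangles of $S_\pm$ are viewed as upper faces of prisms of $G$, $G'$ or as the isolated surfaces in the definition of $M_S$) is exactly the content the paper leaves implicit in ``this follows from the definitions.''
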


\begin{proof}
This follows from the definitions, since $\Lambda_{H'}$ is equal to
$\Lambda_H$ except that the variation of the curvature at the 
vertical edges ending on the vertices of $S$ are given by the 
lower surface $S_-$ rather than by the upper surface $S_+$.
\end{proof}

The interesting point is that $M_S$ is always positive semi-definite,
so that adding it to a positive definite matrix yields another positive
definite matrix.

\begin{lemma} \label{lm:simplexe}
For any simplex $S$ projecting on $\{ z=0\}$ as a quadrilateral,
$M_S$ is positive semi-definite of rank $1$.
\end{lemma}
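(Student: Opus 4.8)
The plan is to compute $M_S$ explicitly, or at least to identify its kernel and signature, by exploiting the special structure of a simplex whose projection is a quadrilateral. The key geometric observation is that $S$ has exactly one interior edge from the point of view of its two-triangle decompositions: the diagonal $(v_3,v_4)$ of $S_-$ and the diagonal $(v_1,v_2)$ of $S_+$ are the two diagonals of the projected quadrilateral, which cross. So passing from $S_+$ to $S_-$ is a single ``diagonal flip'' of the quadrilateral with vertices $v_1,v_2,v_3,v_4$. This is precisely the local move whose effect on the curvature matrix is controlled by the explicit coefficients recalled after Lemma \ref{lm:lambda-hat}: the matrix $\Lambda$ for the two-triangle surface $S_+$ has its (only) interior edge $(v_1,v_2)$, while for $S_-$ it has interior edge $(v_3,v_4)$.

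First I would set up coordinates so that the four heights $h_1,\dots,h_4$ are the variables, and write down the $4\times 4$ matrices $\Lambda_{S_+}$ and $\Lambda_{S_-}$ built from the Izmestiev coefficients: each is supported on the two indices forming the interior diagonal, has rank one there (it is the rank-one ``Laplacian'' $c\,(e_i-e_j)(e_i-e_j)^T$ of a single edge, with $c>0$ by the cotangent-plus-cotangent-over-positive-stuff formula), and is zero on the complementary indices. Then $M_S = \Lambda_{S_-} - \Lambda_{S_+}$ up to the sign conventions in the definition of $M_S$ (the $\theta^-_i-\theta^+_i$ ordering), so $M_S = c_- (e_3-e_4)(e_3-e_4)^T - c_+ (e_1-e_2)(e_1-e_2)^T$ with $c_\pm>0$. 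This is manifestly a difference of two rank-one forms, hence rank at most $2$; the claim is that it is in fact rank $1$ and positive, so the two rank-one pieces must conspire. The mechanism is the linear relation among $e_1-e_2$, $e_3-e_4$ coming from the fact that the four projected points are coplanar (they lie in $\{z=0\}$) and form a \emph{convex} quadrilateral: the diagonals intersect, so the midpoint-type relation forces a nontrivial affine dependence, and after substituting it one finds $M_S = \kappa\, w w^T$ for a single covector $w$ and $\kappa>0$.

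Concretely, I would argue as follows. The diagonals of the convex quadrilateral $v_1v_2v_3v_4$ (projected) meet at an interior point, so there are barycentric weights $\mu_1,\mu_2>0$, $\mu_1+\mu_2=1$, with $\mu_1 p_1 + \mu_2 p_2 = \nu_1 p_3 + \nu_2 p_4$ (the intersection point), where $p_i$ is the projection of $v_i$. Differentiating the two constrained-length reconstructions of the upper and lower surfaces, the first-order vertical-to-vertical curvature maps $\Lambda_{S_+}$ and $\Lambda_{S_-}$ both factor through the change of dihedral angle at the respective crossing diagonal, and the two resulting linear functionals on $(\dot h_i)$ are proportional because they both measure, infinitesimally, the failure of the four points $(p_i,h_i)\in\R^3$ to be coplanar. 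That failure is a single scalar $\phi(\dot h) := $ the signed volume-type $4\times 4$ determinant linearized, and one gets $\theta_i^- - \theta_i^+ = (\text{positive factor depending on } p_i)\cdot \phi(\dot h)$ for each $i$, with the positivity coming from $S$ being non-degenerate (strictly convex quadrilateral, strictly positive height of the diagonal of $S_+$ above $S_-$). Hence $M_S = \big(\nabla_h \phi\big)\big(\text{pos. diag.}\big)\big(\nabla_h \phi\big)^T$ up to arranging the row and column weights to be equal by symmetry of $\Lambda$, which makes $M_S$ positive semi-definite of rank exactly $1$ (it is not zero since $\phi\not\equiv 0$ whenever the projection really is a genuine quadrilateral).

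The main obstacle I anticipate is bookkeeping the sign conventions carefully enough to conclude \emph{positive} rather than negative semi-definite: one must check that the \emph{lower} surface contributes with the sign that makes $M_S\succeq 0$, i.e. that ``removing'' $S$ (replacing $S_+$ by $S_-$ in the hat) genuinely increases the quadratic form. This is exactly the point the introduction flags as ``somewhat surprising.'' Concretely it amounts to verifying that the cotangent coefficient for the diagonal of $S_-$ enters $M_S$ with $+$ and that of $S_+$ with $-$, and then that the common linear functional $\phi$ makes the two contributions combine into $+\kappa\, ww^T$ rather than cancel. I would nail this down by testing on one explicit convex quadrilateral (e.g. a square with prescribed heights), where all four Izmestiev coefficients and the affine relation are computable in closed form; the sign obtained there then holds in general by continuity, since the space of simplices projecting to a convex quadrilateral is connected and $M_S$ never drops rank (its single nonzero eigenvalue is a continuous nowhere-vanishing function on that connected set).
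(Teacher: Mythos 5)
Your proposal has a genuine gap, and it occurs at the central computational step. The claim that the $4\times 4$ matrices $\bigl(\partial\theta_i^{\pm}/\partial h_j\bigr)$ for the two-triangle surfaces $S_+$ and $S_-$ are rank-one ``edge Laplacians'' supported on the endpoints of the crossing diagonal is false. The cotangent formula quoted after Lemma \ref{lm:lambda-hat} computes derivatives of the \emph{total cone angle around interior vertical edges of a hat}, and the vanishing $a_{ij}=0$ holds only for pairs of vertices not joined by an edge; here every pair among $v_1,\dots,v_4$ is joined by an edge of $S_\pm$, and the projection angle of a single tilting triangle at one of its vertices depends on the heights of \emph{all three} of its vertices (e.g. $\partial\theta_3^{+}/\partial h_3\neq 0$ in general). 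Moreover, even if your formula $M_S=c_-(e_3-e_4)(e_3-e_4)^{T}-c_+(e_1-e_2)(e_1-e_2)^{T}$ with $c_\pm>0$ were correct, it could not be rescued by the ``conspiracy'' you invoke: $(e_1-e_2)=(1,-1,0,0)$ and $(e_3-e_4)=(0,0,1,-1)$ have disjoint supports, so no affine relation among the projected points $p_i$ can make them linearly dependent, and such a difference has rank $2$ and is indefinite whenever $c_\pm\neq 0$ (its $(1,1)$ entry $-c_+$ is already negative), flatly contradicting the statement to be proved. (As a side issue, for $S_-$ the diagonal $(v_3,v_4)$ is concave seen from above, so the positivity of $c_-$ via the cotangent expression is itself unjustified.) Your fallback, that each $\dot\theta_i^{-}-\dot\theta_i^{+}$ is a positive multiple of a single linearized-coplanarity functional $\phi(\dot h)$, is the right intuition about the kernel but is asserted rather than derived, and as stated it is internally inconsistent: with all factors positive, symmetry would be violated, and in your own test configuration the differences have opposite signs at $v_1,v_2$ and at $v_3,v_4$; the correct statement is $M_S=\kappa\,ww^{T}$ with $w$ having entries of both signs.

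What is missing is exactly the paper's mechanism for rank and kernel, which requires no explicit coefficients. Height variations induced by Killing fields of $\R^3$ (modulo the three-dimensional subspace preserving $\{z=0\}$, which does not move heights) form a three-dimensional space, namely $\dot h_i=\ell(p_i)$ for affine functions $\ell$; under such a variation $S$ moves rigidly, so $S_+$ and $S_-$ keep the same projection and $\dot\theta_i^{-}-\dot\theta_i^{+}=0$ for all $i$, giving $\operatorname{rank}M_S\le 1$. The rank cannot be $0$: a four-dimensional kernel would let one glue the isometric first-order deformations of $S_+$ and $S_-$ (after adding a trivial deformation to match heights and projections) into a non-trivial isometric deformation of the simplex, contradicting the infinitesimal rigidity of simplices. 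Only after these two steps does your final paragraph---constancy of the signature on the connected space of such simplices plus an explicit check on a symmetric example, which does coincide with the paper's argument---legitimately settle that the sign is positive.
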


\begin{proof}
The space of Killing fields in $\R^3$ has dimension $6$.
It contains a 3-dimensional subspace fixing $\{ z=0\}$,
and therefore acting on $S$ without changing any of the
heights. There remains a 3-dimensional vector space of
Killing fields which do change the heights $h_i$. Each
acts by deforming globally $S$, so that, at each vertex,
the angles of the projection of the upper and the lower
surface change in the same way, and therefore those
first-order variations of the $h_i$ are in the kernel
of $M_S$. So the rank of $M_S$ is at most $1$.

The same argument can be used, conversely, to show that
the rank of $M_S$ can not be zero. Otherwise the kernel of
$M_S$ would have dimension $4$, which would mean that there
exists a non-trivial first-order deformation of $S$ 
leaving invariant the lengths of all edges in both the upper
and lower surfaces, and such that the angles of the 
projections of the upper and lower surface vary in the 
same way. One could then consider the first-order deformations
of the upper and of the lower surface, and add a trivial 
deformation so that they match at all four vertices, because
the first-order variations of both the heigths of the vertices
and the projections of the two surfaces on $\{ z=0\}$ match. 
This would mean that there is a non-trivial
isometric deformation of $S$, and this is well known to
be impossible -- all simplices are infinitesimally rigid.

So the signature of $M_S$ is constant over the space of
simplices which projects on $\{ z=0\}$ as quadrilaterals.
This means that $M_S$ is either positive semi-definite or
negative semi-definite for all such simplices. To decide
which happens, it is sufficient to check for one simplex,
for instance a maximally symmetric one. Consider the 
first-order deformations pictured in Figure 1, with
the heights of $v_1$ and $v_2$ raised and the heights of
$v_3$ and $v_4$ lowered. 

\begin{figure}[htbp]
   \begin{center}
      \includegraphics[width=9cm]{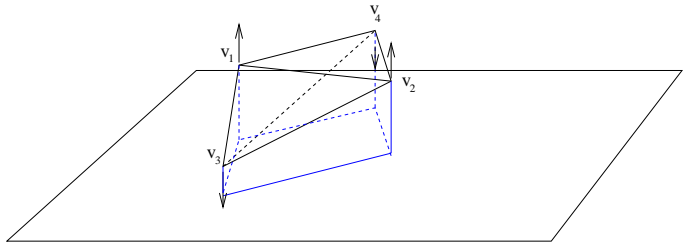}
   \end{center}
   \caption{A positive deformation of a simplex}
\end{figure}

It is easy to check that in this
case:
\begin{itemize}
\item $\theta_1^+$ and $\theta_2^+$ decrease: the angles
of the projection of the upper surface at the projections of
$v_1$ and $v_2$ decrease,
\item $\theta_3^+$ and $\theta_4^+$ increase,
\item $\theta_1^-$ and $\theta_2^-$ increase,
\item $\theta_3^-$ and $\theta_4^-$ decrease,
\end{itemize}
It follows that the first-order variation of $\theta^-_i-\theta^+_i$ 
is positive at $v_1$ and $v_2$ and negative at $v_3$ and $v_4$, so
that $M_S$ has at least one positive eigenvalue. So $M_S$ is 
positive semi-definite.
\end{proof}

\begin{lemma}\label{lm:cvx-weak}
Let $H$ be any weakly convex hat, then $\Lambda_H$ is positive definite.
\end{lemma}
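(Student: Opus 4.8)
The plan is to combine Lemma~\ref{lm:codecomp}, Lemma~\ref{lm:matrice}, Lemma~\ref{lm:simplexe}, and Lemma~\ref{lm:lambda-hat} in a straightforward induction. Starting from the given weakly convex hat $H=H_0$, Lemma~\ref{lm:codecomp} produces a chain $H_0, H_1, \dots, H_p$ of weakly convex hats, terminating at the convex hat $H_p=H_c$, in which each $Sh(H_i)$ is obtained from $Sh(H_{i-1})$ by gluing a simplex $S_i$ whose projection on $\{z=0\}$ is a quadrilateral. Equivalently, reading the chain backwards, $Sh(H_{i-1})$ is obtained from $Sh(H_i)$ by \emph{removing} the simplex $S_i$. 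By Lemma~\ref{lm:matrice}, this means $\Lambda_{H_{i-1}} = \Lambda_{H_i} + M_{S_i}$, where $M_{S_i}$ acts on the rows and columns indexed by those vertices of $S_i$ that are interior vertices of the hats (the entries corresponding to boundary vertices being irrelevant since those heights are fixed and do not index $\Lambda$).

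First I would establish the base case: $H_p = H_c$ is a genuine convex hat, so Lemma~\ref{lm:lambda-hat} (Izmestiev's theorem) applies directly and gives that $\Lambda_{H_c}=\Lambda_{H_p}$ is symmetric and positive definite. Then I would run the induction downward on $i$ from $p$ to $0$: assuming $\Lambda_{H_i}$ is positive definite, Lemma~\ref{lm:simplexe} tells us $M_{S_i}$ is positive semi-definite (of rank $1$), hence $\Lambda_{H_{i-1}} = \Lambda_{H_i} + M_{S_i}$ is a sum of a positive definite matrix and a positive semi-definite matrix, which is again positive definite. One small bookkeeping point to address: $M_{S_i}$ is a priori a $4\times 4$ matrix indexed by all four vertices of $S_i$, whereas $\Lambda_{H_i}$ is indexed only by interior vertical edges; the phrase ``with the lines/columns of $M_S$ added to the lines/columns of $\Lambda_H$ corresponding to the same vertices'' in Lemma~\ref{lm:matrice} means we pad $M_{S_i}$ with zero rows/columns to match the index set of $\Lambda_{H_i}$ (dropping the entries at boundary vertices, whose heights are held fixed). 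A padded positive semi-definite matrix is still positive semi-definite, so the argument goes through unchanged. After $p$ steps we reach $\Lambda_{H_0}=\Lambda_H$, which is therefore positive definite.

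There is essentially no obstacle here — the work has all been done in the preceding lemmas, and this statement is just the assembly step. The only thing requiring a moment's care is the index-set matching in the additivity formula, and the observation that positive definiteness is preserved under adding a (possibly zero-padded) positive semi-definite matrix; both are elementary. I would write the proof as follows.

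\begin{proof}
By Lemma~\ref{lm:codecomp} there is a finite sequence $H_0, H_1, \dots, H_p$ of weakly convex hats with $H_0=H$ and $H_p=H_c$, where $H_c$ is the convex hat associated to $H$, and such that for each $i\in\{1,\dots,p\}$ the shadow $Sh(H_i)$ is obtained from $Sh(H_{i-1})$ by gluing a simplex $S_i$ projecting to $\{z=0\}$ as a quadrilateral. Equivalently, $Sh(H_{i-1})$ is obtained from $Sh(H_i)$ by removing $S_i$, so Lemma~\ref{lm:matrice} gives
$$ \Lambda_{H_{i-1}} = \Lambda_{H_i} + M_{S_i}~, $$
where $M_{S_i}$ is regarded as a matrix indexed by the same interior vertical edges as $\Lambda_{H_i}$, by padding it with zero rows and columns at the edges not among the (at most four) edges ending at vertices of $S_i$, and by discarding the rows and columns corresponding to vertices of $S_i$ lying on the boundary of the hats (whose heights are fixed). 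Such a zero-padding and restriction of a positive semi-definite matrix is again positive semi-definite, so by Lemma~\ref{lm:simplexe} the matrix $M_{S_i}$, viewed this way, is positive semi-definite.

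We now argue by descending induction on $i$ that $\Lambda_{H_i}$ is positive definite. For $i=p$ this is Lemma~\ref{lm:lambda-hat}, since $H_p=H_c$ is a convex hat. Assume $\Lambda_{H_i}$ is positive definite for some $i\in\{1,\dots,p\}$. Then $\Lambda_{H_{i-1}} = \Lambda_{H_i} + M_{S_i}$ is the sum of a positive definite matrix and a positive semi-definite matrix, hence is positive definite. Taking $i=1$ we conclude that $\Lambda_{H_0}=\Lambda_H$ is positive definite.
\end{proof}
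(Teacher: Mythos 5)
Your proof is correct and follows essentially the same route as the paper: reverse the chain of simplex additions from Lemma~\ref{lm:codecomp}, start from the positive definiteness of $\Lambda_{H_c}$ (Lemma~\ref{lm:lambda-hat}), and add the positive semi-definite matrices $M_{S_i}$ one at a time via Lemmas~\ref{lm:matrice} and~\ref{lm:simplexe}. The only difference is that you spell out the index-padding bookkeeping, which the paper leaves implicit.
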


\begin{proof}
Let $H_c$ be the convex hat obtained as the upper boundary of $Sh(H)$.
Lemma \ref{lm:codecomp} shows that $Sh(H)$ is obtained from $Sh(H_c)$
by removing a finite sequence of simplices. But $\Lambda_{H_c}$
is positive definite by Lemma \ref{lm:lambda-hat}, and Lemma 
\ref{lm:matrice} shows that, each time a simplex is removed,
the matrix $\Lambda_H$ changes by the addition of a $4 \times 4$
matrix, which is positive semi-definite by Lemma \ref{lm:simplexe}. 
It follows that $\Lambda_H$ is also positive definite.
\end{proof}

\begin{proof}[Proof of Lemma \ref{lm:main}]
We have already seen that the fact that $\Lambda_H$ is non-degenerate
implies that $H$ is infinitesimally rigid: any isometric
first-order deformation of $H$ which fixes the boundary heights is
trivial.
\end{proof}

\section{Projective maps}

The goal of this section is to prove Theorem \ref{tm:main}, concerning
polyhedra which are star-shaped with respect to one of their vertices,
using Lemma \ref{lm:main}, which deals with weakly convex hats.
The basic idea here is old, going back at least to Darboux 
\cite{darboux12} and Sauer \cite{sauer}: infinitesimal rigidity
is a property which is invariant under projective maps. The particular
case of this property which is used here can be stated more precisely
as follows.

\begin{lemma} \label{lm:darboux}
Let $v_0\in \R^3\subset \R P^3$, and let $\phi:\R P^3\rightarrow \R P^3$
be a projective transformation sending $v_0$ to the point at infinity
corresponding to the vertical direction in $\R^3$. There exists a 
map $\Phi:T\R^3\rightarrow T\R^3$ sending $(x,v)\in T\R^3$ to 
$(\phi(x),\psi_x(v))\in T\R^3$ such that:
\begin{itemize}
\item the image by $\Phi$ of any Killing vector field in $\R^3$ is
a Killing vector field,
\item Killing fields which are infinitesimal rotations of axis
containing $v_0$
are sent to the translations along horizontal directions and the
infinitesimal rotations of vertical axis.
\end{itemize}
\end{lemma}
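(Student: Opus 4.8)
The plan is to construct $\Phi$ explicitly from the projective map $\phi$ and then verify the two required properties by a combination of linear algebra and a naturality argument. First I would set up coordinates so that $v_0$ is a finite point of $\R^3\subset\R P^3$ and $\phi$ is the projective transformation sending $v_0$ to the vertical point at infinity; concretely, after an affine change of coordinates we may assume $v_0$ is the origin and that $\phi$ acts on the relevant affine chart by a formula of the shape $\phi(x,y,z)=(x/z, y/z, -1/z)$ (up to composing with affine maps of $\R^3$, which are harmless since affine maps already send Killing fields to Killing fields and respect the class of rotations about axes through a given point). The map $\phi$ is a diffeomorphism between suitable open subsets of $\R^3$; its differential $d\phi_x:T_x\R^3\to T_{\phi(x)}\R^3$ is a linear isomorphism for each $x$, and the naive choice $\Phi(x,v)=(\phi(x),d\phi_x(v))$ is \emph{not} what we want, because $d\phi$ does not send Killing fields to Killing fields. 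Instead, the correct $\Phi$ is obtained by rescaling: one defines $\psi_x(v)=\mu(x)\,d\phi_x(v)$ for an appropriate scalar function $\mu$, or more invariantly one works with the action of the projective group $PGL(4,\R)$ on vector fields.

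The cleanest route is the following. A Killing field on $\R^3$ (for the Euclidean metric) is the restriction of a projective vector field on $\R P^3$ — indeed, the Lie algebra of Euclidean isometries, $\mathfrak{isom}(\R^3)\cong\mathfrak{so}(3)\ltimes\R^3$, sits inside $\mathfrak{sl}(4,\R)=\mathrm{Lie}(PGL(4,\R))$ as a subalgebra (this is the classical fact underlying Darboux--Sauer). Now $\phi\in PGL(4,\R)$ acts on $\mathfrak{sl}(4,\R)$ by the adjoint representation, hence on projective vector fields on $\R P^3$ by pushforward. So I would \emph{define} $\Phi$ to be this pushforward: for a projective vector field $X$ on $\R P^3$, $\Phi_*X=\phi_*X=\mathrm{Ad}(\phi)X$, which is again a projective vector field. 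Reading off its value at each point gives the pointwise formula $(x,v)\mapsto(\phi(x),\psi_x(v))$ with $\psi_x$ linear in $v$; this is the map $\Phi$ in the statement. By construction $\Phi$ sends projective vector fields to projective vector fields, so the real content of the first bullet is that it sends the \emph{Euclidean Killing subalgebra} $\mathfrak{isom}(\R^3)$ into itself — equivalently, that the image of the standard copy of $\mathfrak{isom}(\R^3)$ under $\mathrm{Ad}(\phi)$ lands back inside $\mathfrak{isom}(\R^3)$ (viewed as vector fields on the target $\R^3$, i.e.\ the target affine chart).

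To see this, I would compute $\mathrm{Ad}(\phi)$ on the two pieces separately. Rotations of $\R^3$ about an axis through $v_0$ form the subalgebra $\mathfrak{so}(3)$ fixing $v_0$; translations of $\R^3$ form the abelian part $\R^3$. The key observation is that an infinitesimal rotation fixing $v_0$ is a projective vector field that vanishes at $v_0$ \emph{and} is tangent to (the real points of) a pencil of lines/quadrics adapted to $v_0$; after applying $\phi$, which sends $v_0$ to the vertical point at infinity, such a field becomes a projective vector field that is "linear at the vertical infinity" in the right way — and a direct inspection of the $3\times 3$ block structure shows these are precisely the infinitesimal isometries of $\R^3$ generated by horizontal translations and rotations about vertical axes. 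Concretely, in the coordinates above one checks: an infinitesimal rotation of $\R^3$ about the $z$-axis through the origin pushes forward under $\phi(x,y,z)=(x/z,y/z,-1/z)$ to the rotation about the vertical ($z$-)axis of the target; an infinitesimal rotation about the $x$- or $y$-axis through the origin pushes forward to a horizontal translation; and the translations of the source chart push forward to the remaining isometries (vertical translation and the vertical "screw"/other generators), which in particular shows $\Phi$ maps all of $\mathfrak{isom}(\R^3)$ into $\mathfrak{isom}(\R^3)$, giving the first bullet, while the explicit identification of the $\mathfrak{so}(3)_{v_0}$ part gives the second. I expect the main obstacle to be purely bookkeeping: choosing the affine normalization of $\phi$ so that these matrix computations are as short as possible, and being careful that "Killing field on $\R^3$" means Killing for the Euclidean structure on the \emph{target} chart (so that the relevant subalgebra of $\mathfrak{sl}(4,\R)$ is the target copy of $\mathfrak{isom}(\R^3)$, not the source copy) — conjugating by $\phi$ identifies the two copies, and it is this identification, restricted to rotations fixing $v_0$, that one must recognize as landing in horizontal translations plus vertical rotations.
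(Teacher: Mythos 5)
Your construction does not work, and the failure is at the central step. You correctly observe at the outset that the naive choice $\psi_x=d\phi_x$ does not send Killing fields to Killing fields, but the map you then adopt as the ``cleanest route'' --- the pushforward $\Phi_*X=\phi_*X=\mathrm{Ad}(\phi)X$ --- \emph{is} that naive choice, acting pointwise by $(\phi_*X)(\phi(x))=d\phi_x(X(x))$. The claim on which your first bullet rests, namely that $\mathrm{Ad}(\phi)$ maps the copy of $\mathfrak{isom}(\R^3)$ inside $\mathfrak{sl}(4,\R)$ back into $\mathfrak{isom}(\R^3)$, is false unless $\phi$ is a similarity: conjugation preserves the projective algebra, but the Euclidean subalgebra is the stabilizer of the plane at infinity together with the absolute conic, and $\phi$ does not preserve these. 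Concretely, with your model map $\phi(x,y,z)=(x/z,y/z,-1/z)$ and $X(x,y,z)=(0,-z,y)$ the infinitesimal rotation about the $x$-axis through $v_0=0$, one computes $\phi_*X=(-uv,\,-1-v^2,\,-vw)$ in target coordinates $(u,v,w)$: a quadratic projective field, not a Killing field, and in particular not the horizontal translation that your ``concrete check'' asserts (only the rotation about the vertical axis through $v_0$ happens to push forward to a Killing field). A scalar rescaling $\psi_x=\mu(x)\,d\phi_x$ cannot repair this either, since the correct image of this rotation is the constant field $(0,-1,0)$, which is not pointwise proportional to $d\phi_x(X(x))$. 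Two smaller slips compound this: affine maps do \emph{not} send Killing fields to Killing fields (only similarities do), and a general projective $\phi$ sending $v_0$ to vertical infinity is not an affine pre/post-composition of your model map, so the claimed normalization is not available as stated.

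What the lemma really requires --- and what the paper's one-line description of $\psi_x$ is pointing at --- is a bundle map whose fiber part differs from $d\phi_x$ by an anisotropic correction in the radial direction toward $v_0$; it is not a conjugation of vector fields. A clean way to produce it: identify the Killing field $X(x)=a\times x+b$ with the skew $4\times4$ matrix $B$ whose upper-left $3\times3$ block is the cross-product matrix of $a$ and whose last column is $(b,0)$, so that $B\hat x=(X(x),-\langle X(x),x\rangle)$ for $\hat x=(x,1)$. If $M\in GL(4,\R)$ lifts $\phi$ and $\lambda(x)$ denotes the last coordinate of $M\hat x$, define $\psi_x(v)$ as $\lambda(x)^{-1}$ times the $\R^3$-part of $(M^{-1})^T(v,-\langle v,x\rangle)$. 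Then the Killing field of $B$ is sent to the Killing field of $(M^{-1})^TBM^{-1}$ (every skew matrix has the above form, so the image is again Killing), which proves the first bullet for \emph{any} projective $\phi$; and rotations about axes through $v_0$ are exactly the $B$ with $B\hat v_0=0$, whose images annihilate $M\hat v_0$, a lift of the vertical point at infinity --- and the skew matrices annihilating $(0,0,1,0)$ are exactly the vertical-axis rotations and horizontal translations, giving the second bullet. Your line-geometry instinct (rotations correspond to their axes, $\phi$ maps lines through $v_0$ to vertical lines and lines at infinity) is the right picture, but it must be implemented through this kind of transformation of the six-dimensional space of Killing fields, not through $\mathrm{Ad}(\phi)$ on vector fields.
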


The proof of this Lemma is left to the reader, since it is quite
classical. The map $\psi_x$ can be explicitly described as follows:
it sends vectors parallel to the direction of $v_0$ to vertical vectors
of the same norm, while acting on vectors orthogonal to the direction
of $v_0$ as the differential of the projective
map $\phi$.

\begin{proof}[Proof of Theorem \ref{tm:main}]
Let $P$ be a weakly convex polyhedron which is star-shaped with 
respect to a vertex $v_0$. Let $U$ be an isometric
first-order deformation of $P$, i.e., the restriction of $V$
to each face of $P$ is a Killing field. Adding a global Killing
field if necessary, we can assume that the restriction of $U$ to
all faces of $P$ containing $v_0$ is a Killing field fixing $v_0$,
i.e., an infinitesimal rotation with axis containing $v_0$.

Let $Q=\phi(P)$, then $Q$ is an infinite polyhedron with 
infinite vertical faces corresponding to the faces of $P$
containing $v_0$. Applying a vertical translation if necessary,
we can suppose that the intersection of $Q$ with $\R^3_+$ is 
of the form $Sh(H)$, where $H$ is a weakly convex hat with 
one face in its upper boundary corresponding to each face of 
$P$ not containing $v_0$ (and conversely).

Now let $V=\Phi(U)$, then, by Lemma \ref{lm:darboux}, the restriction
of $V$ to each face of $Q$ is a Killing field, so that $V$ is a
first-order isometric deformation of $Q$. Moreover, since the
restriction of $U$ to each face of $P$ containing $v_0$ fixes
$v_0$, the restriction of $V$ to the vertical faces of $Q$
are horizontal translations or rotations around a vertical axis. 
So $V$ does not change the heights of
the boundary vertices of $H$. It follows from Lemma \ref{lm:main}
that $V$ is a trivial deformation -- the restriction to $Q$
of a global Killing vector field -- and therefore, again from
Lemma \ref{lm:darboux}, that $U$ is a trivial deformation of $P$.
So $P$ is infinitesimally rigid.
\end{proof}

Note that this argument -- along with the results recalled in section 3, 
but without the need of section 4 -- gives a direct proof of the 
infinitesimal rigidity of convex polyhedra.

\begin{proof}[Proof of Theorem \ref{tm:lambda}]
Let again $Q=\phi(P)$, so that, applying a vertical translation again
if necessary, $Q\cap \R^3_+=Sh(H)$, where $H$ is a weakly convex hat.
Let $(\phi_t)_{t\in [0,1]}$ be a one-parameter family of projective
transformation, chosen such that
\begin{itemize}
\item $\phi_0$ is the identity, while $\phi_1=\phi$,
\item $\phi_t(P)$ is a compact polyhedron in $\R^3$ for all $t\in [0,1)$.
\end{itemize}
Let $P_t=\phi_t(P), 0\leq t<1$. We know by Theorem \ref{tm:main}
that $P_t$ is infinitesimally rigid for all $t\in
[0,1)$. This means by Remark \ref{rk:rigid} that $\Lambda_{P_t}$ has
maximal rank, so that the signature of $\Lambda_{P_t}$ is constant
for $t\in [0,1)$.

But a quick look at the definitions shows that $\lim_{t\rightarrow 1}
\Lambda_{P_t}=\Lambda_H$, which is positive definite by Lemma 
\ref{lm:main}. It follows that $\Lambda_P=\Lambda_{P_0}$ is also
positive definite.
\end{proof}

\subsection*{Acknowledgements}

I'm grateful to Bob Connelly, Fran\c{c}ois Fillastre and Ivan Izmestiev
for useful conversations, and for relevant remarks on the first draft
of this text.

\bibliographystyle{amsplain}

\def\cprime{$'$}
\providecommand{\bysame}{\leavevmode\hbox to3em{\hrulefill}\thinspace}
\providecommand{\MR}{\relax\ifhmode\unskip\space\fi MR }
% \MRhref is called by the amsart/book/proc definition of \MR.
\providecommand{\MRhref}[2]{%
  \href{http://www.ams.org/mathscinet-getitem?mr=#1}{#2}
}
\providecommand{\href}[2]{#2}

\end{document}